\newcommand{\bdry}[1]{\partial #1}
\newcommand{\D}{{\cal D}}
\newcommand{\dint}{\ds{\int}}
\newcommand{\ds}[1]{\displaystyle #1}
\newcommand{\eps}{\varepsilon}
\newcommand{\norm}[2][]{\left\|#2\right\|_{#1}}
\renewcommand{\o}{\text{o}}
\newcommand{\PS}[1]{$(\text{PS})_{#1}$}
\newcommand{\pnorm}[2][]{\if #1'' \left|#2\right|_p \else \left|#2\right|_{#1} \fi}
\newcommand{\R}{\mathbb R}
\newcommand{\seq}[1]{\left(#1\right)}
\newcommand{\set}[1]{\left\{#1\right\}}
\newcommand{\Z}{\mathbb Z}
\DeclareMathOperator{\divg}{div}
\newenvironment{enumroman}{\begin{enumerate}

}{\end{enumerate}}
\newtheorem{theorem}{Theorem}
\theoremstyle{remark}
\newtheorem{remark}[theorem]{Remark}
\title{\bf On the existence of ground state solutions to critical growth problems nonresonant at zero\thanks{{\em MSC2010:} Primary 35B33, Secondary 35J92, 35R11
\newline \indent\; {\em Key Words and Phrases:} $p$-Laplacian, fractional $p$-Laplacian, critical growth problems, nonresonance at zero, ground state solutions}}
\author{\bf Kanishka Perera\\
Department of Mathematical Sciences\\
Florida Institute of Technology\\
Melbourne, FL 32901, USA\\
\em kperera@fit.edu}
\date{}
\begin{document}

\maketitle

\begin{abstract}
We prove the existence of ground state solutions to critical growth $p$-Laplacian and fractional $p$-Laplacian problems that are nonresonant at zero.
\end{abstract}

\bigskip \bigskip

Consider the problem
\begin{equation} \label{1}
\left\{\begin{aligned}
- \Delta_p u & = \lambda\, |u|^{p-2}\, u + |u|^{p^\ast - 2}\, u && \text{in } \Omega\\[10pt]
u & = 0 && \text{on } \bdry{\Omega},
\end{aligned}\right.
\end{equation}
where $\Omega$ is a bounded domain in $\R^N$, $1 < p < N$, $\Delta_p u = \divg (|\nabla u|^{p-2}\, \nabla u)$ is the $p$-Laplacian of $u$, $\lambda \in \R$, and $p^\ast = Np/(N - p)$ is the critical Sobolev exponent. Solutions of this problem coincide with critical points of the $C^1$-functional
\[
E(u) = \frac{1}{p} \int_\Omega |\nabla u|^p\, dx - \frac{\lambda}{p} \int_\Omega |u|^p\, dx - \frac{1}{p^\ast} \int_\Omega |u|^{p^\ast} dx, \quad u \in W^{1,p}_0(\Omega).
\]
Let $K = \big\{u \in W^{1,p}_0(\Omega) \setminus \set{0} : E'(u) = 0\big\}$ be the set of nontrivial critical points of $E$ and set
\[
c = \inf_{u \in K}\, E(u).
\]
Recall that $u_0 \in K$ is called a ground state solution if $E(u_0) = c$. For each $u \in K$,
\[
E(u) = E(u) - \frac{1}{p^\ast}\, E'(u)\, u = \frac{1}{N} \int_\Omega |u|^{p^\ast} dx > 0,
\]
so $c \ge 0$, and $c > 0$ if there is a ground state solution. Let
\[
S = \inf_{u \in \D^{1,p}(\R^N) \setminus \set{0}}\, \frac{\dint_{\R^N} |\nabla u|^p\, dx}{\left(\dint_{\R^N} |u|^{p^\ast} dx\right)^{p/p^\ast}}
\]
be the best Sobolev constant. Denote by $\sigma(- \Delta_p)$ the Dirichlet spectrum of $- \Delta_p$ in $\Omega$ consisting of those $\lambda \in \R$ for which the eigenvalue problem
\begin{equation} \label{2}
\left\{\begin{aligned}
- \Delta_p u & = \lambda\, |u|^{p-2}\, u && \text{in } \Omega\\[10pt]
u & = 0 && \text{on } \bdry{\Omega}
\end{aligned}\right.
\end{equation}
has a nontrivial solution. We have the following theorem.

\begin{theorem} \label{Theorem 1}
If problem \eqref{1} has a nontrivial solution $u$ with
\begin{equation} \label{3}
E(u) < \frac{1}{N}\, S^{N/p}
\end{equation}
and $\lambda \notin \sigma(- \Delta_p)$, then it has a ground state solution.
\end{theorem}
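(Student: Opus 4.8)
The plan is to realize $c$ as the energy of a strong limit of a minimizing sequence drawn from $K$. Since the given nontrivial solution $u$ lies in $K$, we already have $0 \le c \le E(u) < \frac{1}{N}\, S^{N/p}$, so it suffices to produce $u_0 \in K$ with $E(u_0) = c$. The argument splits into two parts: first I would show that nonresonance forces the strict inequality $c > 0$, and then I would use $c < \frac{1}{N}\, S^{N/p}$ to extract from a minimizing sequence of critical points a subsequence converging strongly to a limit that is nontrivial precisely because $c > 0$. Throughout, $\| u \| = \big(\int_\Omega |\nabla u|^p\, dx\big)^{1/p}$ denotes the norm of $W^{1,p}_0(\Omega)$.

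The step I expect to be the main obstacle is showing $c > 0$, and this is exactly where $\lambda \notin \sigma(- \Delta_p)$ enters. Suppose for contradiction that $c = 0$. Then there is a sequence $(u_j) \subset K$ with $E(u_j) \to 0$; since $E(u_j) = \frac{1}{N} \int_\Omega |u_j|^{p^\ast} dx$ for $u_j \in K$ (as computed above), this gives $\int_\Omega |u_j|^{p^\ast} dx \to 0$, and testing $E'(u_j) = 0$ against $u_j$, namely $\| u_j \|^p = \lambda \int_\Omega |u_j|^p\, dx + \int_\Omega |u_j|^{p^\ast} dx$, together with H\"older's inequality on the lower order term, yields $\| u_j \| \to 0$. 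Rescaling $v_j = u_j / \| u_j \|$ gives $\| v_j \| = 1$, and along a subsequence $v_j \rightharpoonup v$ in $W^{1,p}_0(\Omega)$ with $v_j \to v$ in $L^p(\Omega)$. The rescaled equation is $- \Delta_p v_j = \lambda\, |v_j|^{p-2}\, v_j + \| u_j \|^{p^\ast - p}\, |v_j|^{p^\ast - 2}\, v_j$, in which the critical term now carries the vanishing factor $\| u_j \|^{p^\ast - p} \to 0$. Testing this equation with $v_j - v$, both terms on the right tend to zero (the first by strong $L^p$ convergence, the second by the vanishing factor against a bounded integral), so $\langle - \Delta_p v_j,\, v_j - v \rangle \to 0$; the $(S_+)$ property of the $p$-Laplacian then upgrades weak to strong convergence, $v_j \to v$ with $\| v \| = 1$, hence $v \neq 0$. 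Passing to the limit in the weak formulation, with the critical term disappearing, shows $- \Delta_p v = \lambda\, |v|^{p-2}\, v$, so $v$ is a nontrivial eigenfunction and $\lambda \in \sigma(- \Delta_p)$, a contradiction. Therefore $c > 0$.

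With $0 < c < \frac{1}{N}\, S^{N/p}$ established, I would take a minimizing sequence $(u_j) \subset K$ with $E(u_j) \to c$ and $E'(u_j) = 0$, which is a Palais--Smale sequence at level $c$; its boundedness follows as above from $\int_\Omega |u_j|^{p^\ast} dx \to Nc$, H\"older's inequality, and $E'(u_j)\, u_j = 0$. Along a subsequence $u_j \rightharpoonup u_0$, and $u_0$ is a critical point of $E$. The standard concentration--compactness and Brezis--Lieb analysis for this functional gives a decomposition $E(u_j) = E(u_0) + k\, \frac{1}{N}\, S^{N/p} + \o(1)$ for some integer $k \ge 0$ counting the bubbles that split off; since $E(u_0) \ge 0$ (it equals $\frac{1}{N} \int_\Omega |u_0|^{p^\ast} dx$ when $u_0 \neq 0$, and $0$ when $u_0 = 0$) while $c < \frac{1}{N}\, S^{N/p}$, necessarily $k = 0$, so $u_j \to u_0$ strongly and $E(u_0) = c$. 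Finally, because $c > 0 = E(0)$, the limit $u_0$ cannot be trivial, so $u_0 \in K$ with $E(u_0) = c$; that is, $u_0$ is the desired ground state solution.
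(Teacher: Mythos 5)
Your proof is correct, and its core mechanism is the same as the paper's: a minimizing sequence in $K$, compactness below the level $\frac{1}{N}S^{N/p}$, and a blow-down rescaling $u_j/\norm{u_j}$ that converts nonresonance into a contradiction. The differences are in packaging. You prove $c > 0$ first and then invoke compactness, whereas the paper runs the steps in the opposite order: it applies the \PS{c} condition directly (citing Guedda--V\'eron, Theorem 3.4) to get a critical point $u_0$ with $E(u_0) = c$, and only then supposes $u_0 = 0$, which forces $\norm{u_j} \to 0$ and triggers the same rescaling; nontriviality of the rescaled limit $\tilde{u}$ is extracted there not via strong convergence but by testing the rescaled equation with $\tilde{u}_j$ itself, giving $\lambda \int_\Omega |\tilde{u}|^p\, dx = 1$. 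Your explicit use of the $(S_+)$ property is in fact a point in your favor: for $p \neq 2$, passing to the limit in the term $\int_\Omega |\nabla \tilde{u}_j|^{p-2}\, \nabla \tilde{u}_j \cdot \nabla v\, dx$ is not automatic from weak convergence, and the paper's ``passing to the limit in \eqref{4}'' implicitly requires exactly the $(S_+)$ (or a.e.\! gradient convergence) argument you spell out. One caution on your compactness step: the exact quantization $E(u_j) = E(u_0) + k\,\frac{1}{N}\, S^{N/p} + \o(1)$ with integer $k$ is overstated for general $p$ --- the global compactness decomposition for the $p$-Laplacian (Mercuri--Willem) only gives that each bubble contributes energy \emph{at least} $\frac{1}{N}S^{N/p}$, since entire solutions need not be minimal-energy, and that result is considerably heavier machinery than needed. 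The inequality version suffices for your conclusion $k = 0$, but the cleaner route is the paper's: cite the \PS{c} condition below the threshold, which is verified in Guedda--V\'eron by an elementary Brezis--Lieb argument on $w_j = u_j - u_0$ without any profile decomposition.
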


\begin{proof}
Let $\seq{u_j} \subset K$ be a minimizing sequence for $c$. Then $\seq{u_j}$ is a \PS{c} sequence for $E$. Since problem \eqref{1} has a nontrivial solution satisfying \eqref{3}, $c < S^{N/p}/N$. So $E$ satisfies the \PS{c} condition (see Guedda and V{\'e}ron \cite[Theorem 3.4]{MR1009077}). Hence a renamed subsequence of $\seq{u_j}$ converges to a critical point $u_0$ of $E$ with $E(u_0) = c$. We claim that $u_0$ is nontrivial and hence a ground state solution of problem \eqref{1}. To see this, suppose $u_0 = 0$. Then $\rho_j := \norm{u_j} \to 0$. Let $\tilde{u}_j = u_j/\rho_j$. Since $\norm{\tilde{u}_j} = 1$, a renamed subsequence of $\seq{\tilde{u}_j}$ converges to some $\tilde{u}$ weakly in $W^{1,p}_0(\Omega)$, strongly in $L^p(\Omega)$, and a.e.\! in $\Omega$. Since $E'(u_j) = 0$,
\[
\int_\Omega |\nabla u_j|^{p-2}\, \nabla u_j \cdot \nabla v\, dx = \lambda \int_\Omega |u_j|^{p-2}\, u_j v\, dx + \int_\Omega |u_j|^{p^\ast - 2}\, u_j v\, dx \quad \forall v \in W^{1,p}_0(\Omega),
\]
and dividing this by $\rho_j^{p-1}$ gives
\begin{equation} \label{4}
\int_\Omega |\nabla \tilde{u}_j|^{p-2}\, \nabla \tilde{u}_j \cdot \nabla v\, dx = \lambda \int_\Omega |\tilde{u}_j|^{p-2}\, \tilde{u}_j v\, dx + \o(\norm{v}) \quad \forall v \in W^{1,p}_0(\Omega).
\end{equation}
Passing to the limit in \eqref{4} gives
\[
\int_\Omega |\nabla \tilde{u}|^{p-2}\, \nabla \tilde{u} \cdot \nabla v\, dx = \lambda \int_\Omega |\tilde{u}|^{p-2}\, \tilde{u} v\, dx \quad \forall v \in W^{1,p}_0(\Omega),
\]
so $\tilde{u}$ is a weak solution of \eqref{2}. Taking $v = \tilde{u}_j$ in \eqref{4} and passing to the limit shows that $\lambda \int_\Omega |\tilde{u}|^p\, dx = 1$, so $\tilde{u}$ is nontrivial. This contradicts the assumption that $\lambda \notin \sigma(- \Delta_p)$ and completes the proof.
\end{proof}

Combining this theorem with the existence results in Garc{\'{\i}}a Azorero and Peral Alonso \cite{MR912211}, Egnell \cite{MR956567}, Guedda and V{\'e}ron \cite{MR1009077}, Arioli and Gazzola \cite{MR1741848}, and Degiovanni and Lancelotti \cite{MR2514055} gives us the following theorem for the case $N \ge p^2$.

\begin{theorem} \label{Theorem 2}
If $N \ge p^2$ and $\lambda \in (0,\infty) \setminus \sigma(- \Delta_p)$, then problem \eqref{1} has a ground state solution.
\end{theorem}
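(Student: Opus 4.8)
The plan is to deduce Theorem~\ref{Theorem 2} from Theorem~\ref{Theorem 1}. Since the hypothesis already includes $\lambda \notin \sigma(-\Delta_p)$, it suffices to produce a \emph{single} nontrivial solution $u$ of problem \eqref{1} whose energy lies below the compactness threshold, i.e.\ one satisfying \eqref{3}; Theorem~\ref{Theorem 1} then automatically upgrades it to a ground state. So the whole burden of the proof becomes the existence of a nontrivial solution with $E(u) < \frac{1}{N}\, S^{N/p}$, and this is precisely what the cited works establish in the regime $N \ge p^2$, $\lambda > 0$. I would invoke them, but let me indicate the variational mechanism, since the dimension hypothesis enters exactly there.

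First I would set up the minimax geometry. Writing $\lambda_1 := \min \sigma(-\Delta_p)$ for the first eigenvalue, there are two cases. When $0 < \lambda < \lambda_1$ the origin is a strict local minimum of $E$, because the form $\frac{1}{p} \int_\Omega |\nabla u|^p\, dx - \frac{\lambda}{p} \int_\Omega |u|^p\, dx$ is then positive definite, while $E(tw) \to -\infty$ as $t \to \infty$ for any fixed $w \ne 0$ since $p^\ast > p$; this is mountain pass geometry and produces a candidate level $c_\ast > 0$. When $\lambda \ge \lambda_1$ with $\lambda \notin \sigma(-\Delta_p)$ the origin is a saddle, and one instead builds a linking over the eigenspaces associated with the finitely many eigenvalues below $\lambda$ (for the critical problem this is most cleanly done, following Degiovanni and Lancelotti \cite{MR2514055}, by a minimax detected by the $\Z_2$-cohomological index). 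In either case $c_\ast$ is a critical value as soon as the \PS{c_\ast} condition holds, and by the very argument used in the proof of Theorem~\ref{Theorem 1} this is guaranteed once $c_\ast < \frac{1}{N}\, S^{N/p}$.

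The heart of the matter, and the step I expect to be the main obstacle, is the strict inequality $c_\ast < \frac{1}{N}\, S^{N/p}$. The plan is to test the minimax class with the Aubin--Talenti instantons $U_\eps$ realizing $S$ on $\R^N$, truncated near a fixed interior point of $\Omega$ and rescaled by a concentration parameter $\eps \to 0$. Substituting into $E$ and maximizing along the ray $t \mapsto E(t\, U_\eps)$, and using $\int_\Omega |\nabla U_\eps|^p\, dx \to S^{N/p}$ and $\int_\Omega |U_\eps|^{p^\ast} dx \to S^{N/p}$, one obtains the leading value $\frac{1}{N}\, S^{N/p}$; the estimate then reduces to a competition of lower-order terms. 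The \emph{gain} coming from $-\frac{\lambda}{p} \int_\Omega |U_\eps|^p\, dx$ is of order $\eps^p$ when $N > p^2$ and of order $\eps^p \log(1/\eps)$ when $N = p^2$, whereas the \emph{loss} incurred by truncating the slowly decaying tail of $U_\eps$ is of order $\eps^{(N-p)/(p-1)}$. Because $p \le (N-p)/(p-1) \iff N \ge p^2$, the gain dominates for all small $\eps$, so for every $\lambda > 0$ the level is pushed strictly below $\frac{1}{N}\, S^{N/p}$; when $N < p^2$ the two orders coincide and the estimate can fail for small $\lambda$, which is exactly why $N \ge p^2$ is the right hypothesis. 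The genuinely delicate point is that for $\lambda \ge \lambda_1$ the instanton must be combined with eigenfunctions of the lower eigenvalues so that this estimate survives uniformly over the whole linking class; carrying this out is the technical core supplied by the cited papers.

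Finally I would assemble the pieces. The energy estimate yields a nontrivial critical point $u$ of $E$ with $E(u) = c_\ast < \frac{1}{N}\, S^{N/p}$, that is, a nontrivial solution of \eqref{1} satisfying \eqref{3}. Since $\lambda \notin \sigma(-\Delta_p)$ as well, Theorem~\ref{Theorem 1} applies and delivers a ground state solution, completing the argument.
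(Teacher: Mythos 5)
Your proposal is correct and follows exactly the paper's route: the paper proves Theorem~\ref{Theorem 2} precisely by combining Theorem~\ref{Theorem 1} with the existence results of Garc{\'{\i}}a Azorero--Peral Alonso, Egnell, Guedda--V{\'e}ron, Arioli--Gazzola, and Degiovanni--Lancelotti, which supply a nontrivial solution at a level below $\frac{1}{N}\, S^{N/p}$ when $N \ge p^2$ and $\lambda > 0$. Your additional sketch of the minimax geometry and the instanton estimates (with the dichotomy $p \le (N-p)/(p-1) \iff N \ge p^2$) accurately summarizes the mechanism inside those cited works, but it is the same argument, not a different one.
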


For $N < p^2$, combining Theorem \ref{Theorem 1} with Perera et al.\! \cite[Corollary 1.2]{MR3469053} gives the following theorem, where $\seq{\lambda_k} \subset \sigma(- \Delta_p)$ is the sequence of eigenvalues based on the $\Z_2$-cohomological index introduced in Perera \cite{MR1998432} and $|\cdot|$ denotes the Lebesgue measure in $\R^N$.

\begin{theorem} \label{Theorem 3}
If $N < p^2$ and
\[
\lambda \in \bigcup_{k=1}^\infty \Big(\lambda_k - \frac{S}{|\Omega|^{p/N}},\lambda_k\Big) \setminus \sigma(- \Delta_p),
\]
then problem \eqref{1} has a ground state solution.
\end{theorem}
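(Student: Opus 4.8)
The plan is to deduce Theorem~\ref{Theorem 3} from Theorem~\ref{Theorem 1}. By the latter it suffices to produce, for each admissible $\lambda$, a single nontrivial solution $u$ of problem~\eqref{1} satisfying the energy bound~\eqref{3}, namely $E(u) < S^{N/p}/N$; the remaining hypothesis of Theorem~\ref{Theorem 1}, that $\lambda \notin \sigma(-\Delta_p)$, is built directly into the admissible set of $\lambda$ through the set difference in the statement. Thus the whole task reduces to exhibiting one nontrivial critical point of $E$ below the critical energy level.

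To obtain such a critical point I would invoke Corollary~1.2 of Perera et al.~\cite{MR3469053}. That result is tailored to the regime $N < p^2$ and produces a nontrivial solution of~\eqref{1} with energy strictly below $S^{N/p}/N$ whenever $\lambda$ lies in an interval $(\lambda_k - S/|\Omega|^{p/N}, \lambda_k)$, where $\seq{\lambda_k}$ is the sequence of eigenvalues defined through the $\Z_2$-cohomological index. Taking the union over $k$ and then deleting $\sigma(-\Delta_p)$ recovers precisely the set of $\lambda$ in the hypothesis, so for every admissible $\lambda$ this corollary delivers the required $u$ with $E(u) < S^{N/p}/N$, and Theorem~\ref{Theorem 1} then yields a ground state solution.

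It is instructive to see why the width $S/|\Omega|^{p/N}$ is the natural one. Chaining Hölder's inequality $\int_\Omega |u|^p\, dx \le |\Omega|^{p/N} \big(\int_\Omega |u|^{p^\ast} dx\big)^{p/p^\ast}$ with the Sobolev inequality $S \big(\int_\Omega |u|^{p^\ast} dx\big)^{p/p^\ast} \le \int_\Omega |\nabla u|^p\, dx$ gives $\lambda_1 \ge S/|\Omega|^{p/N}$, so each interval $(\lambda_k - S/|\Omega|^{p/N}, \lambda_k)$ lies in $(0,\infty)$ and abuts the eigenvalue $\lambda_k$ from below. In the minimax argument behind the corollary this width is exactly the margin by which the linear term $\tfrac{\lambda}{p}\int_\Omega |u|^p\, dx$ may depress the energy of a linking family, assembled from the eigenfunctions below level $k$ together with truncated Aubin--Talenti instantons concentrated at an interior point, while keeping the minimax level below $S^{N/p}/N$.

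The genuine obstacle sits inside the cited corollary rather than in the combination I am performing: setting up the linking geometry through the $\Z_2$-cohomological index and carrying out the sharp test-function estimate that yields the energy gap in the range $N < p^2$, where the instanton tails are only marginally integrable against $|u|^p$ and the usual Brezis--Nirenberg computation no longer suffices for all positive $\lambda$. Granting Corollary~1.2, the only points left to check in the present proof are that its hypotheses coincide with ours and that the nonresonance condition can be imposed alongside the interval condition, both of which hold by construction of the admissible set.
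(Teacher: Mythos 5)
Your proposal is correct and matches the paper's argument exactly: the paper proves Theorem~\ref{Theorem 3} precisely by combining Theorem~\ref{Theorem 1} with Corollary~1.2 of Perera et al.~\cite{MR3469053}, which supplies a nontrivial solution with energy below $S^{N/p}/N$ for $\lambda$ in the stated intervals, the nonresonance hypothesis being guaranteed by the set difference. Your additional remarks on the interval width and the linking construction are accurate context but not needed for the deduction itself.
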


\begin{remark}
In the semilinear case $p = 2$, Theorem \ref{Theorem 2} was proved in Szulkin et al.\! \cite{MR2553063} using a Nehari-Pankov manifold approach, and Theorems \ref{Theorem 1} and \ref{Theorem 3} were proved in Chen et al.\! \cite{MR2926297} using a more direct approach. Moreover, they allow $\lambda$ to be an eigenvalue when $N \ge 5$. However, their proofs are strongly dependent on the fact that $H^1_0(\Omega)$ splits into the direct sum of its subspaces spanned by the eigenfunctions of the Laplacian that correspond to eigenvalues that are less than or equal to $\lambda$ and those that are greater than $\lambda$. Those proofs do not extend to the $p$-Laplacian since it is a nonlinear operator and hence has no linear eigenspaces.
\end{remark}

\begin{remark}
We conjecture that the assumption $\lambda \notin \sigma(- \Delta_p)$ can be removed from Theorems \ref{Theorem 1} and \ref{Theorem 2} when $N^2/(N + 1) > p^2$.
\end{remark}

Our argument can be easily adapted to obtain ground state solutions of other types of critical growth problems as well. For example, consider the nonlocal problem
\begin{equation} \label{5}
\left\{\begin{aligned}
(- \Delta)_p^s\, u & = \lambda\, |u|^{p-2}\, u + |u|^{p_s^\ast - 2}\, u && \text{in } \Omega\\[10pt]
u & = 0 && \text{in } \R^N \setminus \Omega,
\end{aligned}\right.
\end{equation}
where $\Omega$ is a bounded domain in $\R^N$ with Lipschitz boundary, $s \in (0,1)$, $1 < p < N/s$, $(- \Delta)_p^s$ is the fractional $p$-Laplacian operator defined on smooth functions by
\[
(- \Delta)_p^s\, u(x) = 2 \lim_{\eps \searrow 0} \int_{\R^N \setminus B_\eps(x)} \frac{|u(x) - u(y)|^{p-2}\, (u(x) - u(y))}{|x - y|^{N+sp}}\, dy, \quad x \in \R^N,
\]
$\lambda \in \R$, and $p_s^\ast = Np/(N - sp)$ is the fractional critical Sobolev exponent. Let $\pnorm{\cdot}$ denote the norm in $L^p(\R^N)$, let
\[
[u]_{s,p} = \left(\int_{\R^{2N}} \frac{|u(x) - u(y)|^p}{|x - y|^{N+sp}}\, dx dy\right)^{1/p}
\]
be the Gagliardo seminorm of a measurable function $u : \R^N \to \R$, and let
\[
W^{s,p}(\R^N) = \set{u \in L^p(\R^N) : [u]_{s,p} < \infty}
\]
be the fractional Sobolev space endowed with the norm
\[
\norm[s,p]{u} = \left(\pnorm{u}^p + [u]_{s,p}^p\right)^{1/p}.
\]
We work in the closed linear subspace
\[
W^{s,p}_0(\Omega) = \set{u \in W^{s,p}(\R^N) : u = 0 \text{ a.e.\! in } \R^N \setminus \Omega}
\]
equivalently renormed by setting $\norm{\cdot} = [\cdot]_{s,p}$. Solutions of problem \eqref{5} coincide with critical points of the $C^1$-functional
\[
E_s(u) = \frac{1}{p} \int_{\R^{2N}} \frac{|u(x) - u(y)|^p}{|x - y|^{N+sp}}\, dx dy - \frac{\lambda}{p} \int_\Omega |u|^p\, dx - \frac{1}{p_s^\ast} \int_\Omega |u|^{p_s^\ast}\, dx, \quad u \in W^{s,p}_0(\Omega).
\]
As before, a ground state is a least energy nontrivial solution. Let
\[
\dot{W}^{s,p}(\R^N) = \set{u \in L^{p_s^\ast}(\R^N) : [u]_{s,p} < \infty}
\]
endowed with the norm $\norm{\cdot}$ and let
\[
S = \inf_{u \in \dot{W}^{s,p}(\R^N) \setminus \set{0}}\, \frac{\dint_{\R^{2N}} \frac{|u(x) - u(y)|^p}{|x - y|^{N+sp}}\, dx dy}{\left(\dint_{\R^N} |u|^{p_s^\ast}\, dx\right)^{p/p_s^\ast}}
\]
be the best fractional Sobolev constant. Denote by $\sigma((- \Delta)_p^s)$ the Dirichlet spectrum of $(- \Delta)_p^s$ in $\Omega$ consisting of those $\lambda \in \R$ for which the eigenvalue problem
\[
\left\{\begin{aligned}
(- \Delta)_p^s\, u & = \lambda\, |u|^{p-2}\, u && \text{in } \Omega\\[10pt]
u & = 0 && \text{in } \R^N \setminus \Omega
\end{aligned}\right.
\]
has a nontrivial solution. Following theorem can be proved arguing as in the proof of Theorem \ref{Theorem 1}.

\begin{theorem} \label{Theorem 4}
If problem \eqref{5} has a nontrivial solution $u$ with
\[
E_s(u) < \frac{s}{N}\, S^{N/sp}
\]
and $\lambda \notin \sigma((- \Delta)_p^s)$, then it has a ground state solution.
\end{theorem}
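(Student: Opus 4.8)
The plan is to mirror the proof of Theorem 1 exactly, replacing each ingredient from the local $p$-Laplacian setting by its fractional counterpart. The strategy is: take a minimizing sequence $\seq{u_j} \subset K$ for $c$ (where $K$ and $c$ are now defined relative to $E_s$), argue that it is a (PS)$_c$ sequence, invoke a fractional Palais–Smale condition below the critical threshold $sS^{N/sp}/N$ to extract a convergent subsequence with limit $u_0$ satisfying $E_s(u_0)=c$, and then show $u_0\neq 0$ by a blow-up/normalization argument that forces $\lambda$ into the spectrum if the limit were trivial.

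Let me think about whether each step genuinely transfers. The elementary identity giving $c\ge 0$ should become, for $u\in K$,
\[
E_s(u) = E_s(u) - \frac{1}{p_s^\ast}\, E_s'(u)\, u = \frac{s}{N} \int_\Omega |u|^{p_s^\ast}\, dx > 0,
\]
since $\frac{1}{p}-\frac{1}{p_s^\ast} = \frac{sp}{N}\cdot\frac{1}{p} = \frac{s}{N}$; this explains why the critical level is $sS^{N/sp}/N$ rather than $S^{N/p}/N$. The hypothesis $E_s(u)<sS^{N/sp}/N$ then gives $c<sS^{N/sp}/N$, which is the right range for compactness.

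The crux of the argument is the trivial-limit contradiction. Supposing $u_0=0$, I would set $\rho_j=\norm{u_j}=[u_j]_{s,p}\to 0$ and rescale $\tilde u_j=u_j/\rho_j$, so $\norm{\tilde u_j}=1$ and a renamed subsequence converges weakly in $W^{s,p}_0(\Omega)$, strongly in $L^p(\Omega)$, and a.e.\ to some $\tilde u$. Writing out $E_s'(u_j)=0$ in weak form, dividing by $\rho_j^{p-1}$, and using that the critical term $|u_j|^{p_s^\ast-2}u_j/\rho_j^{p-1}$ carries a factor $\rho_j^{p_s^\ast-p}\to 0$ (since $p_s^\ast>p$), I get the fractional analogue of \eqref{4}: $\tilde u_j$ solves the linearized eigenvalue equation up to an $\o(\norm{v})$ error. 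Passing to the limit shows $\tilde u$ weakly solves the fractional eigenvalue problem with eigenvalue $\lambda$, and testing with $\tilde u_j$ then passing to the limit yields $\lambda\int_\Omega|\tilde u|^p\,dx=1$, so $\tilde u\neq 0$ and $\lambda\in\sigma((-\Delta)_p^s)$, contradicting the hypothesis.

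**The main obstacle** is supplying a fractional Palais–Smale condition below $sS^{N/sp}/N$ to replace the citation to Guedda–Véron used in Theorem 1. This is the one step that is not purely formal: one needs a concentration-compactness analysis for $(-\Delta)_p^s$ showing that a (PS)$_c$ sequence with $c<sS^{N/sp}/N$ cannot lose mass at the critical exponent, so that weak convergence upgrades to strong. Such results are by now standard in the fractional literature (for instance via the fractional Brezis–Lieb lemma and the nonlocal concentration-compactness principle), so I would cite an appropriate reference; the remaining passages to the limit in the weak formulations require only the weak/a.e.\ convergence of $\tilde u_j$ together with the compact embedding $W^{s,p}_0(\Omega)\hookrightarrow L^p(\Omega)$, and are routine.
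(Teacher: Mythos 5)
Your proposal is correct and takes essentially the same route as the paper, which proves Theorem \ref{Theorem 4} precisely by ``arguing as in the proof of Theorem \ref{Theorem 1}'': your adaptation --- the identity $\frac{1}{p} - \frac{1}{p_s^\ast} = \frac{s}{N}$ explaining the threshold $\frac{s}{N} S^{N/sp}$, and the rescaling argument $\tilde{u}_j = u_j/\rho_j$ forcing $\lambda \in \sigma((- \Delta)_p^s)$ if the limit were trivial --- is exactly the intended argument. The fractional \PS{c} condition below $\frac{s}{N} S^{N/sp}$, which you rightly single out as the one nonformal ingredient replacing the Guedda--V\'eron citation, is available in Mosconi et al.\! \cite{MR3530213}, already cited in the paper.
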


Combining this theorem with the existence results in Mosconi et al.\! \cite{MR3530213} and Perera et al.\! \cite{MR3458311} gives us the following theorem, where $\seq{\lambda_k} \subset \sigma((- \Delta)_p^s)$ is the sequence of eigenvalues based on the $\Z_2$-cohomological index.

\begin{theorem} \label{Theorem 5}
Problem \eqref{5} has a ground state solution in each of the following cases:
\begin{enumroman}
\item $N > sp^2$ and $\lambda \in (0,\infty) \setminus \sigma((- \Delta)_p^s)$,
\item $N = sp^2$ and $\lambda \in (0,\lambda_1)$,
\item $N \le sp^2$ and
    \[
    \lambda \in \bigcup_{k=1}^\infty \Big(\lambda_k - \frac{S}{|\Omega|^{sp/N}},\lambda_k\Big) \setminus \sigma((- \Delta)_p^s).
    \]
\end{enumroman}
\end{theorem}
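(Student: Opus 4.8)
The plan is to reduce Theorem \ref{Theorem 5} to Theorem \ref{Theorem 4} in each of the three regimes by invoking the appropriate existence result to produce a nontrivial solution whose energy lies strictly below the critical threshold $\frac{s}{N}\, S^{N/sp}$, and then to check that the spectral condition $\lambda \notin \sigma((- \Delta)_p^s)$ holds. The common mechanism is that each cited existence theorem is established by a min-max argument whose critical level $c^\ast$ is shown to satisfy $c^\ast < \frac{s}{N}\, S^{N/sp}$ — precisely the bound needed to recover compactness — so the solution it produces automatically verifies the energy hypothesis of Theorem \ref{Theorem 4}. Once such a solution is in hand and $\lambda \notin \sigma((- \Delta)_p^s)$, Theorem \ref{Theorem 4} delivers a ground state solution.

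For case (i), $N > sp^2$, I would split the parameter range. When $\lambda \in (0,\lambda_1)$ the functional $E_s$ has the mountain pass geometry, and the Brezis--Nirenberg-type analysis of Mosconi et al.\! \cite{MR3530213} produces a nontrivial solution below the threshold; here $(0,\lambda_1) \cap \sigma((- \Delta)_p^s) = \emptyset$ since $\lambda_1$ is the smallest eigenvalue, so the spectral condition is automatic. When $\lambda \ge \lambda_1$ and $\lambda \notin \sigma((- \Delta)_p^s)$, we have $\lambda \in (\lambda_k,\lambda_{k+1})$ for some $k$ (with $\lambda_0 := 0$), and the linking results of Perera et al.\! \cite{MR3458311} based on the $\Z_2$-cohomological index give a subthreshold critical point; since $\lambda_k \to \infty$, these intervals exhaust $(0,\infty) \setminus \sigma((- \Delta)_p^s)$. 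In both subcases Theorem \ref{Theorem 4} applies.

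Cases (ii) and (iii) follow the same template, with the admissible parameter ranges dictated by how far below the threshold the min-max level can be pushed in each dimension. In the critical dimension $N = sp^2$ the subthreshold estimate is most delicate and is available only along the mountain pass for $\lambda \in (0,\lambda_1)$ via \cite{MR3530213}, where once more $\lambda \notin \sigma((- \Delta)_p^s)$ automatically; this accounts for the restriction in (ii). For $N \le sp^2$, the linking estimate of Perera et al.\! \cite{MR3458311} — the fractional analogue of \cite[Corollary 1.2]{MR3469053} used in Theorem \ref{Theorem 3} — yields a subthreshold solution exactly when $\lambda$ lies in one of the intervals $\big(\lambda_k - S/|\Omega|^{sp/N},\lambda_k\big)$, and the hypothesis of (iii) excludes $\lambda \in \sigma((- \Delta)_p^s)$, so Theorem \ref{Theorem 4} applies in that regime as well.

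I expect the main obstacle to be bookkeeping at the level of this paper rather than new analysis: verifying that each cited theorem indeed produces a solution at a level strictly below $\frac{s}{N}\, S^{N/sp}$ under the stated dimensional restriction, and that the set of $\lambda$ excluded in Theorem \ref{Theorem 4} is compatible with the ranges in (i)--(iii). The genuinely hard analytic work — the fractional Talenti-bubble truncation estimates that certify the subthreshold bound, which are most fragile in the critical dimension $N = sp^2$ and force the restriction to $(0,\lambda_1)$ there — resides in \cite{MR3530213} and \cite{MR3458311} and may be quoted directly.
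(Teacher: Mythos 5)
Your proposal follows exactly the route the paper itself takes: Theorem \ref{Theorem 5} is obtained there with no further argument than combining Theorem \ref{Theorem 4} with the subthreshold existence results of \cite{MR3530213} and \cite{MR3458311}, and your observation that the minimax levels in those papers lie below $\frac{s}{N}\,S^{N/sp}$ precisely because that bound is what restores compactness, together with the remark that $(0,\lambda_1) \cap \sigma((-\Delta)_p^s) = \emptyset$ in case $(ii)$, is the right bookkeeping.

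One attribution in case $(i)$ should be corrected, since as written it leaves part of that case unsupported: the bifurcation results of \cite{MR3458311} produce nontrivial solutions only for $\lambda$ in the left neighborhoods $\bigl(\lambda_k - S/|\Omega|^{sp/N}, \lambda_k\bigr)$ of the variational eigenvalues --- which is exactly why case $(iii)$ is restricted to those intervals and why that result holds with no dimensional restriction --- so it cannot cover an arbitrary $\lambda$ in a spectral gap $(\lambda_k,\lambda_{k+1})$, e.g.\! a $\lambda$ with $\lambda > \lambda_k$ and $\lambda \le \lambda_{k+1} - S/|\Omega|^{sp/N}$. The coverage of the full nonresonant range $\lambda \in (0,\infty) \setminus \sigma((-\Delta)_p^s)$ when $N > sp^2$, including the linking regime $\lambda \ge \lambda_1$, comes from \cite{MR3530213} itself, whose cohomological-index linking construction (replacing linear eigenspace splittings, which are unavailable for the nonlinear operator) handles all such $\lambda$ under that dimension restriction; $\cite{MR3458311}$'s role in the theorem is case $(iii)$ alone. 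With the citations reassigned in this way your argument coincides with the paper's.
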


\begin{remark}
Theorems \ref{Theorem 4} and \ref{Theorem 5} are new even in the semilinear case $p = 2$.
\end{remark}

\begin{remark}
We conjecture that problem \eqref{5} has a ground state solution for all $\lambda > 0$ when $N^2/(N + s) > sp^2$.
\end{remark}

{\small \def\cdprime{$''$}
}

\end{document}